\theoremstyle{plain}
\newtheorem{theorem}{Theorem}[section]
\newtheorem{lemma}[theorem]{Lemma} 
\newtheorem{prop}[theorem]{Proposition}
\newtheorem{cor}[theorem]{Corollary}
\newtheorem{defin}[theorem]{Definition}
\newcommand{\cF}{\mathcal{F}}
\newcommand{\cL}{\mathcal{L}}
\newcommand{\ro}{\varrho}
\DeclareMathOperator{\Core}{\mathsf{Core}}
\DeclareMathOperator*{\LW}{\bigg\rmoustache_{\cL}}
\author{Attila Egri-Nagy and Chrystopher L. Nehaniv} 
\address{School of Computer Science\\
        University of Hertfordshire\\
        College Lane\\
        Hatfield, Herts\\
        UK}
\email{\{A.Egri-Nagy,\ C.L.Nehaniv\}@herts.ac.uk}
\keywords{permutation groups, hierarchical decomposition, Lagrange coordinates}
\subjclass{20B05, 20B40, 20M10, 20M35, 68Q70}
\title[Subgroup Chains and Lagrange Coordinatizations ]
      {Subgroup Chains and Lagrange Coordinatizations of Finite Permutation Groups}
\begin{document}

\begin{abstract}
We give a general constructive proof for hierarchical coordinatizations (Lagrange Decompositions) of permutation groups. The generalization originates from the investigation of how the subgroup chains of finite permutation groups yield different coordinate systems. The study is motivated by the practical needs and the verification of an existing computational implementation. Large scale machine calculated examples are also presented.
\end{abstract}

\maketitle

\section{Introduction}

We consider coordinatizations of finite permutation groups, i.e.\ hierarchical decompositions into subwreath products. Ultimately, we would like to use these coordinate systems as cognitive tools for understanding and manipulating processes describable by permutation groups. Prominent example is our positional number notation system, a coordinate system built from copies  of ${\mathbb Z}_{10}$, modulo 10 counters. However, for preparing real-world applications we need to investigate the nature of these coordinate systems. 

There are many different attributes of a hierarchical decomposition describing its dimensions, complexity of the components and their connection network. It turns out that these are all determined by the subgroup chain that underpins the decomposition,  but the chain itself is not the right form for enabling easy calculation in the decomposition.
The Jordan-H\"{o}lder Theorem gives  decompositions but not a calculus\footnote{In computer science terms, by coordinatization we put a user interface on the group structure.}.  Here we study how the attributes of the chains can be translated into the attributes of the coordinate systems. The outcome of this investigation is meant to be a mathematical toolbox for 'engineering' coordinatizations.

These coordinatizations use the  idea behind induction in representation theory (see e.g.\ \cite{AlperinBell}), so it traces back to Frobenius and it is also known as the Krasner-Kaloujnine embedding \cite{KrasnerKaloujnine}. All we need here is just standard group theory, namely the cosets, hence the name Lagrange Decomposition. Strictly speaking very little new mathematical results are presented here, however a different perspective, a new way of thinking is introduced: we actually build the coordinate systems with their dependency structure in an efficient way instead of only establishing embedding into the wreath product. For practical applications  and computer science this may be revolutionary. 

The constructive proof given here closely follows the computationally implemented algorithms \cite{sgpdec} for increasing usability and enabling verification of the software package.
\subsection{Notation and Terminology}

A \emph{subgroup chain} of group $G$ is a sequence of groups such that $G=G_1\geq\ldots\geq G_n$. If $G_n=\{1\}$ then the chain is \emph{total}. For reducing the notational burden we simply write $(G_1,\ldots,G_n)$ for the chain. A subgroup chain is \emph{subnormal} if $G_i\rhd G_{i+1}$ for all $1 \leq i < n$.

\noindent In addition to the usual \emph{permutation group} notation $(X,G)$ we also use $[X,G]$ denoting a \emph{group acting by permutations} when the action is not necessarily faithful.

\noindent The \emph{core} or \emph{normal interior} of subgroup $H$ in $G$ is 
$$\Core_G(H)=\bigcap_{g\in G}g^{-1}Hg,$$
which is the largest normal subgroup of $G$ contained in $H$. The subgroup $H$ is \emph{core-free} in $G$ if $\Core_G(H)=\{1\}$. See standard references \cite{RobinsonGroups,CameronPermGroups99}.

\section{Cascaded Structures Built from  Groups Acting by Permutations}
Here we describe a different way of thinking about wreath products. The emphasis is put on the connection network between the components of the product and on the substructures of the full wreath product. Also, this approach is more constructive, instead of establishing an embeddding into a wreath product, we would like to actually build a group hierarchically from simpler components. This is in accordance with the recent directions of group theory \cite{SautoyNewScientist2008,FindingMoonshine2008}.  
Clearly, the following construction is the same for permutation groups and transformation semigroups. 

Let $L=[X_1,C_1],\ldots,[X_n,C_n]$ be an ordered list of groups $C_i$ acting by permutations on sets $X_i$, calling $[X_1,C_1]$ the top and $[X_n,C_n]$ the bottom level component\footnote{The ordering is due to the constraints of a software implementation, as in computer algebra system lists are usually indexed by starting from 1. This partially clashes with the mathematical canon, but as we would like to describe and verify our algorithms, we simply have no choice.}.
Let $F_i$, $i\in\{1,\ldots, n\}$, each be a family of functions from $X_1\times\ldots\times X_{i-1}$ to $C_i$. Such a function $f_i\in F_i$, called a \emph{dependency function}, determines the action on the $i$th level depending on the states of the levels above.
Then a \emph{cascaded structure} built from $L$ is any  group acting by permutations of the form
$$[X_1\times\ldots\times X_n, \cF\subseteq F_1\times\ldots\times F_n] $$
\noindent denoted by 
$$ [X_1,C_1]\wr_{\cF}\ldots\wr_{\cF} [X_n,C_n].$$
\noindent The action is defined by
\begin{equation}
\label{eq:cascact}
(x_1,\ldots, x_n)\cdot (f_1, \ldots, f_n) = (y_1,\ldots, y_n)
\end{equation}
\noindent where
\begin{align*}
y_1&=f_1() \text{  constant function taking value in}\ C_1,\\
y_i&=x_i\cdot f_i(x_1,\ldots, x_{i-1}),\   x_i \in X_i, f_i\in F_i, 2\leq i \leq n.
\end{align*}
 $\cF$ is called the \emph{dependency structure}, a \emph{system of dependencies}, or simply the \emph{`wiring'}.
\subsection{Wreath Product} If $F_i=C_i^{X_1\times\ldots\times X_{i-1}}$, i.e.\ the set of all functions from  $X_1\times\ldots\times X_{i-1}$ to $C_i$ and $ \cF= F_1\times\ldots\times F_n$, then we have the wreath product of the groups in $L$ denoted by $[X_1,C_1]\wr\ldots\wr [X_n,C_n]$. Thus cascaded structures are substructures of the wreath product. Except for a small set of components, the wreath product is a huge structure and it can become easily intractable computationally.

\noindent\textbf{Remark.} The direct products in the above constructions are set theoretic, and they are not equipped with multiplication. The multiplication within the cascaded structures is a complicated operation when considered componentwise, and it is given by (\ref{eq:cascact}) and function composition. 
\section{Lagrange Coordinates}

The coordinatization of the right regular representation of a permutation group is the easiest to describe, therefore we construct coordinates for $(G,G)$, then we proceed to other representations $(X,G)$ and show how the construction changes.

\begin{theorem}[Lagrange Coordinatization]
\label{thm:lagrangecoords}
 Let $G$ be a group and $(G_1, \ldots, G_n)$ be a total subgroup chain of $G$, then the permutation group $(G,G)$  admits the following coordinatization
$$(G,G)\twoheadleftarrow\LW_{1\leq i<n}[G_i/G_{i+1},G_i],$$
which is a bijection on states.\footnote{Note that the cascaded structure used in the coordinatization is not the full wreath product.}
\end{theorem}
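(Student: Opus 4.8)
The plan is to prove this by induction on the length $n$ of the chain, building the coordinatization level by level and using cosets exactly as the name ``Lagrange'' suggests. First I would set up the coordinate map on states. For each $i$ with $1\leq i<n$, the $i$th component acts on the coset space $G_i/G_{i+1}$, so a state is a tuple $(G_2 a_1, G_3 a_2, \ldots, G_n a_{n-1})$ of right cosets, one from each level. The total number of such tuples is $\prod_{i=1}^{n-1}[G_i:G_{i+1}]=[G_1:G_n]=|G|$ by the Lagrange/tower formula (using $G_n=\{1\}$ since the chain is total), which already matches $|G|$ and makes plausible the claim that we get a bijection on states.

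Next I would construct the explicit bijection $\varphi\colon G\to \prod_{i=1}^{n-1} G_i/G_{i+1}$ realizing the coordinates. The natural choice is the iterated coset decomposition: given $g\in G$, write $g$ in terms of a chosen system of coset representatives at each level, so that the successive quotients $G_i/G_{i+1}$ record ``where $g$ lives'' as we descend the chain. Concretely, I would fix transversals $T_i$ for $G_{i+1}$ in $G_i$ and peel off factors one at a time, obtaining a factorization $g=t_1 t_2\cdots t_{n-1}$ with $t_i\in T_i$; the coordinates of $g$ are then the cosets determined by the $t_i$. That this factorization exists and is unique is the standard transversal/tower argument, and it immediately gives bijectivity of $\varphi$ on states, establishing the ``bijection on states'' clause.

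The real content is checking that $\varphi$ is an isomorphism of permutation groups onto a cascaded structure of the stated form, i.e.\ that right multiplication by $g\in G$ is implemented by a tuple of dependency functions $(f_1,\ldots,f_{n-1})$ with $f_i$ a function from the coordinates above level $i$ into $C_i=G_i$, acting on $G_i/G_{i+1}$ as in \eqref{eq:cascact}. Here I would compute, for a fixed $g$, how the coset at level $i$ transforms under right multiplication: when we multiply the representative product by $g$ and re-normalize back into the transversals, the new level-$i$ coset depends on the current representative at level $i$ together with the ``carries'' coming from the levels above, and this dependence is exactly what the dependency function $f_i$ must encode. Verifying that $f_i$ indeed depends only on the coordinates $x_1,\ldots,x_{i-1}$ (the levels strictly above), and that the action $G_i$ on $G_i/G_{i+1}$ is faithful enough to give the claimed components, is the bookkeeping that makes the cascaded structure land in $\LW_{1\leq i<n}[G_i/G_{i+1},G_i]$ rather than the full wreath product.

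I expect the main obstacle to be the compatibility of multiplication with the coordinate map, namely showing that the re-normalization of representatives after multiplying by $g$ produces dependency functions of the correct arity and that composition of these tuples matches multiplication in $G$. Intuitively this is the ``carry propagation'' familiar from positional number systems (the $\mathbb{Z}_{10}$ example in the introduction), and the care lies in verifying that carries only ever flow downward through the chain, so that $f_i$ is a genuine function of the higher coordinates alone. Once this downward-dependence is pinned down, the surjection $\sur$ onto $(G,G)$ and the bijectivity on states follow from the transversal factorization, completing the proof.
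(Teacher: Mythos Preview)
Your approach is essentially the paper's own: it too fixes coset representatives, builds the state bijection by iterated transversal factorization (its maps $\ro_s$ and $\phi$, with the tower-of-indices count relegated to Proposition~\ref{prop:lagrangesize}), and then checks by induction on the level index that right multiplication re-normalizes into dependency functions depending only on the higher coordinates (Proposition~\ref{prop:isomorphism}). One small slip to fix when you write it out: with right cosets and the right regular action the factorization comes out as $g=t_{n-1}\cdots t_1$ (the paper's $\phi(\overline{g_1},\ldots,\overline{g_{n-1}})=\overline{g_{n-1}}\cdots\overline{g_1}$), not $t_1\cdots t_{n-1}$, and with that order the carry you describe is exactly the explicit formula $h_i=\overline{g_{i-1}}\,h_{i-1}\big(\overline{\overline{g_{i-1}}h_{i-1}}\big)^{-1}\in G_i$.
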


 Note that unlike previous formulations (e.g.\ \cite[Ch.\ 1]{pdclnbookA}), we do not require the chain to be subnormal. As the components are not necessarily faithful, the coordinatization is a surmorphism.   First we show how to assign coordinate values to the elements of $G$ as states, then describe how to construct a set of dependencies for any $g\in G$ as a permutation (thus building $\cL$). These will serve as a constructive proof of Theorem \ref{thm:lagrangecoords}.

\subsection{Coordinatizing States}

For each consecutive pairs in the list we construct the set of right cosets $G_i/G_{i+1}$. These are the state sets of the components in the cascaded structure of Theorem \ref{thm:lagrangecoords}. As usual, we choose arbitrary but fixed representatives for the cosets and act on them instead of the cosets themselves. It is not absolutely necessary, but to make calculations shorter from now on, when possible we always choose the identity permutation to be a representative element. As multiplications in the group can end up anywhere within the cosets, we require to have an operation that takes any element to its coset representative: $g\mapsto \overline{g}$. However, the notation is a bit ambiguous as it needs to be clear from the context that in which set of cosets we take the representative element. Therefore if it is needed to avoid ambiguity, we index the bar along a chain, $\overset{\scriptscriptstyle{i}}{\overline{g}}$ meaning that it is a representative element of a coset in $G_i/G_{i+1}$, thus  $\overset{\scriptscriptstyle{i}}{\overline{g}}\in G_i$. 

 The following basic properties of cosets are stated in a lemma, as they  will be used often later on.

\begin{lemma}
\label{broadway}
Let $G$ be a group and $H<G$. Then we have the following for the right coset representatives of $G/H$. For any $g,k\in G$,
\begin{enumerate}
\item $\overline{g}=\overline{\overline{g}}$
\item $\overline{gk}=\overline{\overline{g}k}$
\end{enumerate}
\end{lemma}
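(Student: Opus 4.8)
The plan is to derive both identities directly from the defining property of the representative map $g \mapsto \overline{g}$: it is constant on each right coset of $H$ in $G$ and selects an actual element of that coset. I would first record the two consequences this entails. For any $g \in G$ the representative $\overline{g}$ lies in the same right coset as $g$, so that $H\overline{g} = Hg$; and two elements have equal representatives precisely when they lie in a common coset. Both parts of the lemma then reduce to verifying a single coset equality.

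For part (1), since $\overline{g}$ lies in the coset $Hg$, it lies in the same coset as $g$ and hence as itself. Applying the representative map to $\overline{g}$ therefore returns the unique chosen representative of that coset, which by construction is $\overline{g}$ itself. Hence $\overline{\overline{g}} = \overline{g}$; this is simply idempotency of the map.

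For part (2), the key observation is that right multiplication by a fixed $k$ carries right cosets to right cosets. From $H\overline{g} = Hg$ I right-multiply by $k$ to obtain $H\overline{g}k = Hgk$, so $\overline{g}k$ and $gk$ occupy the same right coset and therefore share a representative. This gives $\overline{\overline{g}k} = \overline{gk}$, as required.

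The one genuine point requiring care — the place where a reader might stumble rather than a deep obstacle — is the coset convention. Identity (2) relies essentially on the cosets being \emph{right} cosets $Hg$, compatible with the right action of $G$ on itself; with left cosets the crucial step $H\overline{g}k = Hgk$ fails, since conjugation by $k$ need not preserve $H$. I would therefore make the right-coset convention explicit at the outset and confirm that right multiplication is well defined on $G/H$ before invoking it.
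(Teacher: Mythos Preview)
Your proof is correct and follows essentially the same approach as the paper: part (1) is dismissed as trivial idempotency, and part (2) is reduced to the coset identity $H\overline{g}k = (Hg)k = Hgk$ via the right action of $G$ on $G/H$. Your additional remark about the necessity of the right-coset convention is a welcome clarification but does not alter the underlying argument.
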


\begin{proof} (1) is trivial. (2) By considering the action of $G$ on cosets of $H$, the statement is obvious: $H\overline{g}k=(H\overline{g})k=(Hg)k=H{gk}$.
\end{proof}

\begin{defin}The action of $G$ on coset representatives\footnote{Actually, all the following constructions and proofs can be described as acting on cosets, which would make the proofs easier. However, in a computational implementation we cannot calculate the images of potentially big cosets, but hit a representative element and correct if the resulting image is not a coset representative.} for $G/H$ is given by
$$\overline{g}*k=\overline{{\overline{g}k}}.$$

\end{defin}

\subsubsection{Raising Group Elements as States} A \emph{cascaded state} in the cascaded structure is a tuple  of coset representatives $(\overline{g_1}, \ldots, \overline{g_{n-1}})$ where $\overline{g_i}$ is the representative element in $G_i$ for a coset $G_{i+1}g_i$, $g_i\in G_i$.
Now we establish a mapping from the elements of $G$ to the cascaded states, called \emph{raising}, $\ro: G \rightarrow \prod_{1\leq i <  n}G_i/G_{i+1}$, and the inverse operation  $\phi=\ro^{-1}$ is called \emph{flattening}. Raising, $\ro: g\mapsto (\overline{g_1}, \ldots, \overline{g_{n-1}})$, is defined recursively and done in two stages. First we locate the permutations  describing the  action of $g$ in the subgroups.
\begin{defin}[Locating Permutations within Subgroups]
\label{def:locator}
Let $G$ be a group and $(G_1, \ldots, G_n)$ be a total subgroup chain of $G$ and $g\in G$. We define the map $g \mapsto (g_1,\ldots, g_{n-1})$ as 
\begin{eqnarray*}
g_1 &=& g\\
g_i&=& g_{i-1}\cdot(\overset{\scriptscriptstyle{i-1}}{\overline{g_{i-1}}})^{-1}, \ \ 1< i <n.
\end{eqnarray*}
\end{defin} 
\noindent So starting from the identity element (as the representative of the coset of a subgroup) we go to an element $g$ possibly ending up in another coset, where we go to the coset representative. This last step is projected back to the subgroup by taking the inverse of the representative element. In other words, the computation in a translate of the subgroup is expressed within the subgroup (Fig. \ref{fig:locating}). 

We need to show that these coordinate values are in the right subgroups, i.e.\ that $g_i\in G_i$, so that $\bar{g_i}$ is well-defined. 
\begin{lemma}
\label{lemma:rightaction}
Let $G$ be a group and $(G_1, \ldots, G_n)$ be a total subgroup chain of $G$. For $g\in G$ locate $(g_1,\ldots, g_{n-1})$ as in Definition \ref{def:locator},
then $g_{i}\in G_i$. 
\end{lemma}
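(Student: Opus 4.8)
The plan is a straightforward induction on $i$ that rides on the recursion in Definition \ref{def:locator}. The single fact that makes everything work is that any element and its chosen right coset representative lie in the same coset: if $H<K$ and $k\in K$, then $Hk=H\overline{k}$, so $k\,\overline{k}^{-1}\in H$. I would apply this with $K=G_{i-1}$ and $H=G_i$.

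For the base case $i=1$, we have $g_1=g\in G=G_1$ directly from the hypothesis that the chain starts at $G$. This is also exactly what is needed to guarantee that $\overset{\scriptscriptstyle{1}}{\overline{g_1}}$, the representative of a coset in $G_1/G_2$, is well-defined. For the inductive step I would assume $g_{i-1}\in G_{i-1}$. This hypothesis is precisely what makes $\overset{\scriptscriptstyle{i-1}}{\overline{g_{i-1}}}\in G_{i-1}$ a legitimate representative of the right coset $G_i g_{i-1}$, and hence makes $g_i=g_{i-1}\,(\overset{\scriptscriptstyle{i-1}}{\overline{g_{i-1}}})^{-1}$ defined in the first place. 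Since $g_{i-1}$ and $\overset{\scriptscriptstyle{i-1}}{\overline{g_{i-1}}}$ represent the same coset of $G_i$ in $G_{i-1}$, we get $G_i g_{i-1}=G_i \overset{\scriptscriptstyle{i-1}}{\overline{g_{i-1}}}$, so their quotient $g_{i-1}\,(\overset{\scriptscriptstyle{i-1}}{\overline{g_{i-1}}})^{-1}$ lands in $G_i$. That is exactly $g_i\in G_i$, which closes the induction and covers the whole range $1\le i\le n-1$.

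The point to be careful about — rather than a genuine obstacle — is the interleaving of well-definedness and the claim itself: the representative $\overset{\scriptscriptstyle{i-1}}{\overline{g_{i-1}}}$ only makes sense once we know $g_{i-1}\in G_{i-1}$, so the induction must establish membership at each level before the next representative can be formed. Phrasing the induction hypothesis as ``$g_{i-1}\in G_{i-1}$, and therefore $g_i$ is defined'' keeps this honest. Lemma \ref{broadway} is not strictly needed here, but its coset identities are the same flavour of manipulation and could be invoked to make the step $G_i g_{i-1}=G_i \overset{\scriptscriptstyle{i-1}}{\overline{g_{i-1}}}$ fully formal.
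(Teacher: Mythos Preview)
Your proof is correct and is essentially the same induction as the paper's: the base case $g_1=g\in G_1$, and the step that $g_{i-1}$ and $\overline{g_{i-1}}$ lie in the same right coset of $G_i$, so their quotient lands in $G_i$. The paper phrases that step as a composite of right-multiplication maps $G_{i+1}\xrightarrow{g_i}G_{i+1}g_i\xrightarrow{\overline{g_i}^{-1}}G_{i+1}$, but this is just a pictorial rendering of your coset equality; your explicit remark about the interleaving of well-definedness with the inductive claim is a nice addition the paper leaves implicit.
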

\proof The statement is true for the top level, $g_1=g\in G=G_1$. Now inductively, given that $g_i\in G_i$, locating the next coordinate gives $g_{i+1}= g_i\overline{g_i}^{-1}$. Now let's consider the following set maps for the coset $G_{i+1}$ in $G_i/G_{i+1}$ given by right multiplication by fixed elements of $G$:
\begin{center}
\mbox{
\xymatrix@C=31pt@R=11pt{
G_{i+1}\ar@{->}[r]^{g_i} & G_{i+1}g_i\\
G_{i+1}g_{i}\ar@{->}[r]^{\overline{g_i}^{-1}}& G_{i+1} \\
G_{i+1}\ar@{->}[r]^{g_i\overline{g_i}^{-1}}& G_{i+1}
}}
\end{center} 
\noindent thus $g_i\overline{g_i}^{-1}\in G_{i+1}$. The composite map is trivial only if $g_i=\overline{g_i}$. 
\endproof
\begin{figure}
\includegraphics[width=.66\textwidth]{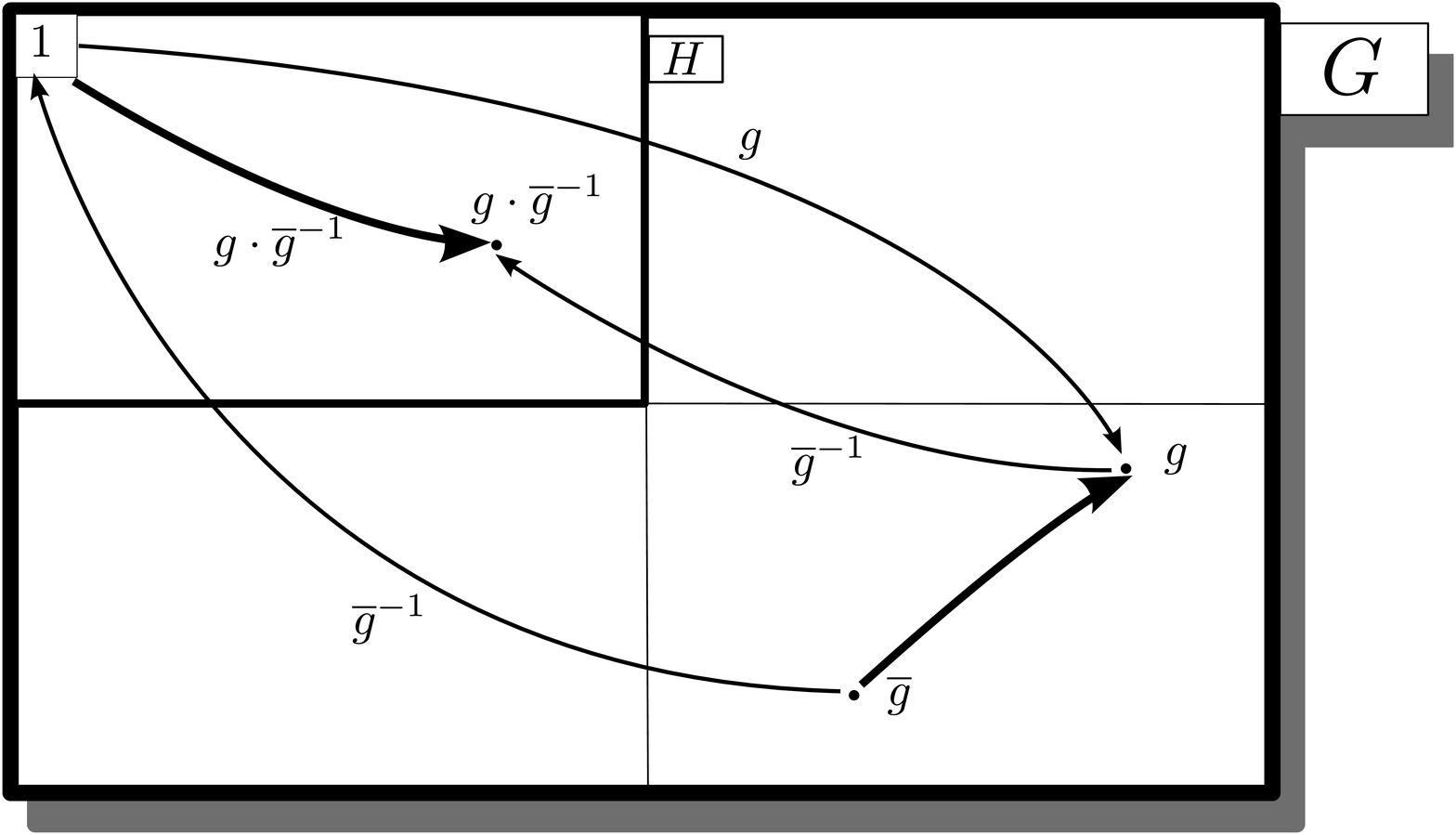}
\caption{Locating the permutation corresponding to $g\in G$ in $H<G$ with respect to the right cosets $G/H$.}
\label{fig:locating}
\end{figure}
\noindent For finishing raising, as the second step, we simply switch to the representative elements $g_i\mapsto\overline{g_{i}}$, in order to have valid coordinate values.

\subsubsection{Flattening states}\label{sect:flatstates} Now given $\ro_s(g)=(\bar{g_1},\ldots, \bar{g_n})$ we would like to find the element $g=\phi(\ro_s(g))\in G$. Flattening reveals the purpose of the recursive trickery above, $$\phi: (\overline{g_1},\ldots, \overline{g_{n-1}})\mapsto \overline{g_{n-1}}\cdot\overline{g_{n-2}}\cdots \overline{g_1},$$so we simply do the product of the coordinates bottom up.
Expanding $\overline{g_{n-1}}$ we get
$$\overset{\scriptscriptstyle{n-1}}{\overline{g_{n-2}\overset{\scriptscriptstyle{n-2}}{\overline{g_{n-2}}}{}^{-1}}}\cdot\overset{\scriptscriptstyle{n-2}}{\overline{g_{n-2}}}\cdot\overset{\scriptscriptstyle{n-3}}{\overline{g_{n-3}}}\cdots \overset{\scriptscriptstyle{1}}{\overline{g_1}}$$
but as on the deepest level we have the cosets of the trivial group (when decomposing along total chains) we can remove the top bar, since the cosets are singletons.
$$g_{n-2}\underbrace{\overline{g_{n-2}}^{-1}\cdot\overline{g_{n-2}}}_1\cdot\overline{g_{n-3}}\cdots \overline{g_1}.$$
The cancellation makes another one possible, like falling dominoes. Generally,
$$g_i\cdot \overline{g_{i-1}}\cdot\overline{g_{i-2}}\cdots = g_{i-1}\overline{g_{i-1}}^{-1}\cdot \overline{g_{i-1}}\cdot\overline{g_{i-2}}\cdots = g_{i-1}\cdot\overline{g_{i-2}}\cdots$$
where $i$ goes down to 1, leaving only $g_1$, which is by definition equals to $g$.  
Thus  $\phi(\ro_s(g))=g$, therefore the bijection is established.

For mathematical purposes we could have a much shorter way for proving the bijection. By Lagrange Theorem $|G|= |\prod_{1\leq i <  n}G_i/G_{i+1}|$ (see Appendix \ref{prop:lagrangesize}), and it is easy to show inductively that if two cascaded states map down to the same element, then they should have the same coordinates on all levels (as cosets containing the same group element are unique). However, in  a computational settings we need to actually calculate coordinates.


\subsection{Coordinatizing Permutations}

Similarly to states, we would like to raise group elements as permutations and flatten cascaded permutations. We can actually reuse the symbols $\ro$ and $\phi$, but to distinguish we use $\ro_s$ for raising states and $\ro_p$ for raising permutations.
Thus for $h\in G$ as permutation, raising gives a tuple of dependency functions $\ro_p(h)=(h_1,\ldots, h_{n-1})$, a member of $\cL$.
This set of dependencies is a quite complicated object, it is a labelled tree\footnote{ 'Acting on trees' seems to be the unifying idea of all concepts of cascaded structures. See \cite{monoidtreeactrhodes,monoidtreeactnehaniv}}. The arrows are labelled by the elements of the state sets of the components, and the nodes by the elements of the components (by the values of the dependency functions).
Due to this complexity of the object we cannot describe them explicitly (only in very simple cases). Instead, we define them recursively and give the values of dependency functions on concrete coordinates, i.e.\ on a path in the tree. We call these coordinate value permutations \emph{component actions}. Let $\ro_p{h}=(h_1,\ldots,h_{n-1})\in\cL$ and $(\overline{g_1},\ldots,\overline{g_{n-1}})$ is a coordinatized state, then the action is 
\begin{eqnarray} (\overset{\scriptscriptstyle{1}}{\overline{g_1}},\ldots,\overset{\scriptscriptstyle{n-1}}{\overline{g_{n-1}}}) * \ro_p(h)&=& (\overline{g_1}*h_1,\ldots,\overline{g_{n-1}}*h_{n-1})\\
&=&\big(\overset{\scriptscriptstyle{1}}{\overline{\overset{\scriptscriptstyle{1}}{\overline{g_1}}\cdot h_1}},\ldots,\overset{\scriptscriptstyle{n-1}}{\overline{\overset{\scriptscriptstyle{n-1}}{\overline{g_{n-1}}}\cdot h_{n-1}}}\big)
\nonumber
\label{eq:lagraction}
\end{eqnarray}
where the $h_i$'s are defined recursively by
\begin{eqnarray}
h_1&=&h\nonumber \\
h_i&=&\overset{\scriptscriptstyle{i-1}}{\overline{g_{i-1}}}h_{i-1}\Big(\overset{\scriptscriptstyle{i-1}}{\overline{\overset{\scriptscriptstyle{i-1}}{\overline{g_{i-1}}}h_{i-1}}}\Big)^{-1}.\label{eq:fudge}
\end{eqnarray}
\noindent The new notation $*$ for the action is introduced to distinguish it from the original group operation; the difference is mainly that we take the representative element after the multiplication in case of $*$. Note that $h_i$ really is a dependency function with arguments $(\overline{g_1},\ldots,\overline{g_{i-1}})$. The idea of Lemma \ref{lemma:rightaction} applies here as well, thus $\overline{g_{i-1}}h_{i-1}\overline{\overline{g_{i-1}}h_{i-1}}^{-1}\in G_{i}$. It is also clear, that locating permutations in the subgroups (Definition \ref{def:locator}) is a special case of these component actions, namely the ones we get when we apply $\ro_p(g), g\in G$ as permutation to the cascaded state consisting of the identities on each level (which is the cascaded state corresponding to the identity of $G$, by convention).

\begin{prop}
\label{prop:isomorphism}
$\ro_s(g\cdot h) = \ro_s(g)\cdot\ro_p(h)$.
\end{prop}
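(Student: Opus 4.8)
The plan is to establish the identity coordinate by coordinate. Write $\ro_s(g)=(\overline{g_1},\ldots,\overline{g_{n-1}})$, where $(g_1,\ldots,g_{n-1})$ is located from $g$ as in Definition~\ref{def:locator}, and let $\ro_p(h)=(h_1,\ldots,h_{n-1})$ with the component actions $h_i$ evaluated along the path $(\overline{g_1},\ldots,\overline{g_{i-1}})$ by (\ref{eq:fudge}). Formula (\ref{eq:lagraction}) then says that the $i$th coordinate of $\ro_s(g)\cdot\ro_p(h)$ is $\overline{\overline{g_i}h_i}$, with all bars read in $G_i/G_{i+1}$. On the other side, locating $gh$ directly from Definition~\ref{def:locator} yields a tuple $(k_1,\ldots,k_{n-1})$ with $k_1=gh$ and $k_i=k_{i-1}\overline{k_{i-1}}^{-1}$, so that $\ro_s(gh)=(\overline{k_1},\ldots,\overline{k_{n-1}})$. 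Hence the whole proposition reduces to the coordinatewise claim $\overline{k_i}=\overline{\overline{g_i}h_i}$ for every $1\le i<n$.

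The crux is to prove the stronger, element-level invariant
$$k_i=g_i h_i,\qquad 1\le i<n,$$
by induction on $i$; this is genuinely stronger than what is asked, since $\ro_s$ records only cosets, yet it is exactly what survives the recursion. The base case is immediate: $g_1=g$ and $h_1=h$ give $g_1h_1=gh=k_1$. For the inductive step I would substitute the defining recursions and cancel the adjacent $\overline{g_i}^{-1}\overline{g_i}$:
$$g_{i+1}h_{i+1}=\big(g_i\overline{g_i}^{-1}\big)\big(\overline{g_i}h_i\,\overline{\overline{g_i}h_i}^{-1}\big)=g_i h_i\,\overline{\overline{g_i}h_i}^{-1}.$$
Using the inductive hypothesis $g_ih_i=k_i$ together with Lemma~\ref{broadway}(2) to rewrite $\overline{\overline{g_i}h_i}=\overline{g_ih_i}=\overline{k_i}$, the right-hand side becomes $k_i\overline{k_i}^{-1}=k_{i+1}$, closing the induction.

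Once the invariant holds, the required coordinate identity follows in one line, $\overline{k_i}=\overline{g_ih_i}=\overline{\overline{g_i}h_i}$, again by Lemma~\ref{broadway}(2), and this is precisely the $i$th coordinate of $\ro_s(g)\cdot\ro_p(h)$. I expect the main obstacle to be discovering the right invariant: the statement only constrains cosets, so it is not \emph{a priori} clear that the full group elements $k_i$ and $g_ih_i$ should coincide, and attempting to push the weaker coset equality $\overline{k_i}=\overline{\overline{g_i}h_i}$ directly through the induction does not obviously propagate, because passing to representatives breaks the clean telescoping cancellation. A secondary care point is bookkeeping: every bar must be read in the correct quotient $G_i/G_{i+1}$, and each identification of the form $\overline{xy}=\overline{\overline{x}y}$ must be an honest application of Lemma~\ref{broadway}(2) at the appropriate level.
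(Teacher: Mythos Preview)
Your proof is correct and follows the same inductive route as the paper: unwind the recursions for $g_{i+1}$, $h_{i+1}$, and $(gh)_{i+1}$, cancel the adjacent $\overline{g_i}^{-1}\overline{g_i}$, and invoke Lemma~\ref{broadway}(2). The one organizational difference is that you explicitly isolate and prove the element-level invariant $k_i=g_ih_i$, whereas the paper only states the coset-level hypothesis $\ro_s(gh)_i=\overline{g_ih_i}$; your version is the cleaner formulation, since the paper's final substitution of $(gh)_i$ for $g_ih_i$ inside $\overline{\,\cdot\,\overline{\,\cdot\,}^{-1}}$ in fact needs the element-level equality you prove. Note also that the paper's proof appends material beyond the displayed identity (that $\ro_p$ gives an embedding and that $\ro_p(hh')$ acts as $\ro_p(h)\ro_p(h')$); these are not part of the proposition as stated, so their absence from your argument is not a gap.
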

\proof

 For the top level, $i=1$, the statement is true, as  $\ro_s(gh)_1=\overline{gh}=\overline{\overline{g}h}=\ro_s(g)_1\cdot\ro_p(h)_1$, using Lemma \ref{broadway}.

 We proceed by induction, assuming that $\ro_s(gh)_i=\overline{\overline{g_i}\cdot h_i}=\overline{g_i\cdot h_i}$, by Definition \ref{def:locator} the next state coordinate in $\varrho_s(g)$ is
$$\overset{\scriptscriptstyle{i+1}}{\overline{g_{i+1}}}=\overset{\scriptscriptstyle{i+1}}{\overline{g_i\overset{\scriptscriptstyle{i}}{\overline{g_i}}{}^{-1}}}$$
By (\ref{eq:fudge}) the next component action of $\ro_p(h)$ on $\ro_s(g)$ is
$$ h_{i+1}=\overline{g_i}h_i\overline{\overline{g_i}h_i}^{-1}\in G_{i+1}$$  

\noindent Now, carrying out the component action by (\ref{eq:lagraction}) 
$$\overline{g_{i+1}} * h_{i+1} = \overset{\scriptscriptstyle{i+1}}{\overline{\overset{\scriptscriptstyle{i+1}}{\overline{g_i\overline{g_i}^{-1}}}\cdot  \overline{g_i}h_i\overline{\overline{g_i}h_i}^{-1}}}$$
by Lemma \ref{broadway}(2)

$$=\overline{g_i\underbrace{\overline{g_i}^{-1}\cdot  \overline{g_i}}_{1}h_i\overline{\overline{g_i}h_i}^{-1}}$$
$$=\overline{g_ih_i\overline{\overline{g_i}h_i}^{-1}}$$
after cancellation, applying  Lemma \ref{broadway}(2) again
$$=\overline{g_ih_i\overline{g_ih_i}^{-1}}$$
\noindent then by the induction assumption and using Definition \ref{def:locator}
$$=\overline{(gh)_i\overline{(gh)_i}^{-1}}=\ro_s(gh)_{i+1}.$$

This is an embedding since $\ro_s(1)\cdot \ro_p(g)=\ro_p(g)$ determines $g$, and is a bijection since $\phi$ is the inverse of $\ro_s$.

Also, for all $h,h'\in G$ we have that the action of $\ro_p(hh')$ is that
same as the action of $\ro_p(h)$ followed by that of $\ro_p(h')$:
Let $x=(g_1,\ldots,g_{n-1})$ be any state, and let $g=\phi(x)$. Then,
applying what we have just shown above, 
$x\cdot \ro_p(h)\cdot \ro_p(h') 
= \ro_s(g)\cdot \ro_p(h)\cdot\ro_p(h')
= \ro_s(gh)\cdot\ro_p(h')
= \ro_s(ghh')
=\ro_s(g)\cdot \ro_p(hh')$. Thus the actions of $\ro_p(hh)$ and $\ro_p(h)\ro_p(h')$ are equal on the set of all states. 
In particular, the $i^{\rm th}$ component actions are equal modulo the core of $G_{i+1}$ in $G_i$.  It is not hard to see that $\ro_p^{-1}$ is surjective onto $G$. Thus, we have a surjective mapping of actions of groups which is bijective on states.
\endproof
With this proposition we have established isomorphism between $G$ and its coordinate system based on a total subgroup chain, therefore we have proved the Lagrange Decomposition Theorem.

Notation:  $\mathfrak{L}\big(G\mid G>\ldots>\langle1\rangle\big)$ denotes  the Lagrange decomposition of $G$ based on the given chain.

\subsection{Obtaining Permutation Group Components}
In order to get permutation group components for Theorem \ref{thm:lagrangecoords} we need to make  the action $[G/H,G]$ faithful (for a consecutive pair $G>H$ in the chain ). If $G\rhd H$ then simply the factor group $G/H$ is the faithful action.
In the general case we act on $G/H$ by $G/\Core_G(H)$ instead, or shortly $G/_{\Core}H$. Algorithmically we calculate how the generators of $G$ act on $G/H$, thus we get a new generating set. Then we remove duplicated generators.  

This  way we have a more precise version of Theorem \ref{thm:lagrangecoords} that establishes isomorphism:

\begin{cor}
 Let $G$ be a group and $(G_1, \ldots, G_n)$ be a total subgroup chain of $G$, then the permutation group $(G,G)$  admits the following coordinatization
$$(G,G)\cong\LW_{1\leq i<n}(G_i/G_{i+1},G_i/_{\rm Core}\,G_{i+1}).$$
\end{cor}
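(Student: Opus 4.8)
The plan is to upgrade the surmorphism of Theorem~\ref{thm:lagrangecoords} to an isomorphism by replacing each non-faithful component $[G_i/G_{i+1},G_i]$ with the faithful permutation group it induces, and then checking that this reduction is compatible with the wiring $\cL$. Theorem~\ref{thm:lagrangecoords} already supplies a surmorphism $\LW_{1\leq i<n}[G_i/G_{i+1},G_i]\sur(G,G)$ that is a bijection on states, so the only obstruction to an isomorphism is the kernel on the permutation side, which arises entirely from the non-faithfulness of the individual components. Thus it suffices to identify each faithful quotient and to argue that quotienting out this redundancy levelwise makes the global map injective as well as surjective.

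First I would identify the kernel of each component. The component $[G_i/G_{i+1},G_i]$ is the action of $G_i$ by right multiplication on the right coset space $G_i/G_{i+1}$. An element $g\in G_i$ lies in the kernel exactly when $G_{i+1}xg=G_{i+1}x$ for every $x\in G_i$, i.e.\ when $xgx^{-1}\in G_{i+1}$ for all $x\in G_i$, i.e.\ when $g\in\bigcap_{x\in G_i}x^{-1}G_{i+1}x=\Core_{G_i}(G_{i+1})$. Hence the faithful permutation group carried by this component is precisely $(G_i/G_{i+1},G_i/_{\rm Core}\,G_{i+1})$, which is the component appearing in the statement.

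Next I would assemble the faithful cascade. Passing to the faithful quotient on each level leaves the state sets $G_i/G_{i+1}$ untouched, so the state bijection $\phi=\ro^{-1}$ from Theorem~\ref{thm:lagrangecoords} is preserved verbatim; the dependency structure $\cL$ descends to a well-defined wiring valued in the quotients because $\Core_{G_i}(G_{i+1})$ acts trivially on $G_i/G_{i+1}$. The resulting cascade acts faithfully on the product of state sets: if a cascaded permutation $(f_1,\ldots,f_{n-1})$ fixes every state, then reading top-down, faithfulness of the top component forces the constant $f_1()$ to be trivial, and inductively faithfulness at level $i$ forces $f_i$ to be trivial at every argument. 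Since $(G,G)$ is the regular representation and hence also faithful, the equivariant map that is bijective on states must then be injective on the permutation side as well; combined with the surjectivity already furnished by Theorem~\ref{thm:lagrangecoords}, this yields the claimed isomorphism.

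The main obstacle is the compatibility check: one must be sure that the levelwise reduction modulo the cores is harmless for both the multiplication and the dependency structure, so that Proposition~\ref{prop:isomorphism} survives the passage to quotients. Inspecting the recursive formula~(\ref{eq:fudge}) and the state action~(\ref{eq:lagraction}), every occurrence of a component value $h_i$ enters only through a coset representative and the representative-taking operation $\overline{\,\cdot\,}$, both of which are unchanged when $h_i$ is multiplied by an element of $\Core_{G_i}(G_{i+1})$. Consequently the identity $\ro_s(gh)=\ro_s(g)\cdot\ro_p(h)$ of Proposition~\ref{prop:isomorphism} holds verbatim for the faithful quotients, and the remark there that the $i$th component actions are determined only modulo $\Core_{G_i}(G_{i+1})$ is exactly what guarantees that no information is lost in the reduction.
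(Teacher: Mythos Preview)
Your proposal is correct and follows essentially the same route as the paper: invoke Theorem~\ref{thm:lagrangecoords} and then observe that replacing each component $[G_i/G_{i+1},G_i]$ by its faithful quotient amounts precisely to factoring by $\Core_{G_i}(G_{i+1})$, since two elements of $G_i$ act identically on $G_i/G_{i+1}$ iff they agree modulo the core. The paper states this in a single sentence, whereas you spell out the kernel computation, the inductive faithfulness of the cascade, and the compatibility of the reduction with the wiring $\cL$; these elaborations are all correct and in the spirit of the paper's remark in Proposition~\ref{prop:isomorphism} that the $i^{\rm th}$ component actions are determined only modulo the core.
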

\proof
The statement is immediate from the proof of Theorem \ref{thm:lagrangecoords} and from the fact that making the action faithful is equivalent to factoring by the core: two elements of $G_i$ are equivalent modulo the core iff they
act the same on cosets of $G_i/G_{i+1}$.
\endproof
\subsection{Basic Attributes of Coordinatizations}

The \emph{length} of the coordinatization is the number of dimensions, the number of hierarchical levels of the decomposition. If the underlying chain has $n$ members, then we have $n-1$ components, thus the length is $n-1$. The intuition is that longer decompositions yield simpler components, where simpler could mean reduced number of symmetries or states (or both). As a degenerate case the trivial coordinatization of group $G$ based on the chain $G>\langle 1\rangle$ is $G$ itself.

The \emph{width} of a  component  is the number of points it acts on, the number of coordinate values on that level. In Lagrange Coordinatization it is the index $G_i:G_{i+1}$.

\subsection{Coordinatizing Transitive Actions}

We saw that coordinatizing according to a total chain gives the right regular action. But we also need coordinatizations of acting on smaller sets as well, i.e.\ we would like to build a cascaded structure isomorphic to $(X,G)$ where $|X|<|G|$ and $G$ acts on $X$ transitively.

What are those smaller actions? Though it is quite a basic question, the answer is very rarely included in standard group theory textbooks. From \cite{CameronPermGroups99}: given a group $G$, the isomorphic transitive permutation groups are classified by the conjugacy classes of core-free subgroups of $G$. If $H$ is core-free in $G$ then $(G/H,G)$ is a permutation group.
In order build a cascaded structure isomorphic to this action, we need to   cut the total chain at $H$. The only construction that relies on the totality of the subgroup chain is flattening the states (Section \ref{sect:flatstates}). There in order to remove the $n-1^{\rm th}$ bar we needed the trivial group, but since here we are only interested in the action on the cosets of $H$ not on their elements, the removal of the last bar is possible in this more general case, using the fact that $Hg=H\overline{g}$ for the  cosets of $H$.  
Thus we have

\begin{theorem}[Lagrange Coordinatization for Transitive Actions]
\label{thm:lagrangecoordsonGX}
Let $G$ act on $X$ transitively.  Let $G=G_1 > \ldots> G_n=H$ be a subgroup chain for $G$, where $H$ is the stabilizer of some element of $X$. 
Then  $[X,G]$  admits the following coordinatization
$$[X,G]\twoheadleftarrow\LW_{1\leq i<n}[G_i/G_{i+1},G_i],$$
which is a bijection on states.\\
If in addition $(X,G)$ is a permutation group, then
$(X,G)$  admits the following coordinatization
$$(X,G)\cong\LW_{1\leq i<n}(G_i/G_{i+1},G_i/_{\rm Core}\,G_{i+1}).$$
\end{theorem}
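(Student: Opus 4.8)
The plan is to reduce everything to the right-coset action and then reuse the regular-representation construction of Theorem~\ref{thm:lagrangecoords}. Since $H=G_n$ is the stabilizer of a point $\alpha\in X$, the orbit map induces a $G$-equivariant bijection from the right cosets $G/H$ onto $X$, namely $Hk\mapsto\alpha\cdot k$, intertwining right multiplication $Hk\cdot m=Hkm$ with the action on $X$. Hence it suffices to build the cascaded structure $\LW_{1\leq i<n}[G_i/G_{i+1},G_i]$ coordinatizing $(G/H,G)$.

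Next I would observe that the locating recursion of Definition~\ref{def:locator}, the containment $g_i\in G_i$ of Lemma~\ref{lemma:rightaction}, the component-action recursion~(\ref{eq:fudge}), and the homomorphism identity $\ro_s(gh)=\ro_s(g)\cdot\ro_p(h)$ of Proposition~\ref{prop:isomorphism} are all established level by level and nowhere invoke $G_n=\{1\}$; they therefore hold verbatim for the non-total chain $G=G_1>\cdots>G_n=H$. This already produces a cascaded action on the tuples $(\overline{g_1},\ldots,\overline{g_{n-1}})$, $\overline{g_i}\in G_i/G_{i+1}$, that is compatible with right multiplication.

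The one step that genuinely used totality is flattening, which I would adapt so that it recovers a right coset of $H$ rather than a single element. First I would check that $\ro_s$ is constant on each coset $Hg$: if $g'=hg$ with $h\in H$, an induction along the chain gives $g_i'=hg_i$, and since $h\in H=G_n\leq G_{i+1}$ for every $i\leq n-1$ we obtain $\overline{g_i'}=\overline{g_i}$ on every level, so $\ro_s$ descends to $G/H$. By the tower law $\prod_{i=1}^{n-1}[G_i:G_{i+1}]=[G:H]=|X|$, so the number of cascaded states equals $|X|$; and the telescoping cancellation of the flattening product, now read modulo $H$ via $H\overline{g}=Hg$, collapses to $Hg_1=Hg$, exhibiting $\phi$ as a two-sided inverse and hence $\ro_s$ as a bijection on states. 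Together with Proposition~\ref{prop:isomorphism} this yields the surmorphism that is bijective on states. For the isomorphism statement I would argue exactly as in the Corollary: replacing each level $[G_i/G_{i+1},G_i]$ by the faithful $(G_i/G_{i+1},G_i/_{\rm Core}\,G_{i+1})$ is legitimate because two elements of $G_i$ act identically on $G_i/G_{i+1}$ precisely when they agree modulo $\Core_{G_i}(G_{i+1})$, upgrading the surmorphism to an isomorphism.

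The main obstacle I anticipate is precisely the legitimacy of this modified flattening when $G_n=H\neq\{1\}$: one must verify both that dropping the bottom bar is well defined on $G/H$ and that the coordinatization so recovered is genuinely the action inherited from Proposition~\ref{prop:isomorphism}, rather than some artefact of the choice of representatives. This reduces to rerunning the telescoping induction while tracking right cosets of $H$ in place of individual group elements and confirming the state count $|X|$; once that is in place, everything else is inherited unchanged from the total case.
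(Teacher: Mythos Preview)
Your proposal is correct and follows essentially the same route as the paper: reduce $[X,G]$ to the coset action $[G/H,G]$ via the orbit--stabilizer bijection, observe that only the flattening step in Section~\ref{sect:flatstates} uses totality of the chain, and repair it by working modulo $H$ via $Hg=H\overline{g}$; the core argument for the isomorphism version is likewise the same. You are more explicit than the paper in checking that $\ro_s$ is constant on each coset $Hg$ and in invoking the tower law for the state count, but these merely spell out details the paper leaves implicit.
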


\section{Example Coordinatizations}
\subsection{Rotational Symmetries of the Tetrahedron}
The rotation group of the tetrahedron is the alternating group $A_4$. We give two coordinatizations, one according to a chief series:
\begin{equation}
\mathfrak{L}\big(A_4\mid (A_4,C_2\times C_2,\{1\}\big)=C_3\wr_{\cL} (C_2\times C_2),
\end{equation}
and another one along a composition series:
\begin{equation}
\mathfrak{L}\big(A_4\mid (A_4,C_2\times C_2,C_2,\{1\}\big)=C_3\wr_{\cL} C_2\wr_{\cL} C_2.
\end{equation}
The first coordinatization admits a nice geometrical interpretation: the top level corresponds to rotations of 3 vertices keeping the other vertex fixed, while the second level represents the possible flips around the 3 diagonals connecting the opposite edges in the tetrahedron. These generate a Klein 4-group $C_2 \times C_2$ acting on these diagonals. Note that the top level order 3 group of rotations maps acts on these diagonals cyclically.

It can be seen that in the second coordinatization two components not in hierarchical relation but completely independent (hence the direct  product  $(C_2\times C_2)$) are forced into a hierarchical structure. However, this is not a limitation of the coordinatization method, as looking at the dependency structure would reveal that there is no real dependency between level 2 and 3, i.e. changes in the 2nd coordinate cannot influence the value of the dependency function on the 3rd level. 
\subsection{ Solving Strategies for the Rubik's Cube}

Each coordinatization (each subgroup chain) corresponds  to a solving strategy of the permutation puzzle. For instance, for the $2\times 2\times 2$ Pocket Cube, the following coordinatization
$$S_8\wr C_3 \wr S_7\wr C_3 \wr S_6\wr C_3 \wr S_5\wr C_3 \wr S_4\wr C_3 \wr S_3\wr C_3 \wr C_2\wr C_3$$
corresponds to  a really step-by-step fashion: get the position and the orientation of the first corner right, then proceed to the next corner until the cube is solved.

Contrasting to the previous, very machine-minded solution, here is another one which is short, and reveals the existence of a different puzzle within the Pocket Cube:
$$S_8 \wr \prod_{i=1}^{7}C_3.$$
The top level component is the right regular representation of the now familiar symmetric group permuting the 8 corners. The second level is the direct product of 7 copies of modulo 3 counters (the orientation group of corners). It is to be noted that not 8 copies, otherwise every corner could be rotated independently from the other corners (and that would be rather easy to solve). Actually solving the bottom level is the same type of problem as the Rubik's Clock \cite{RubiksClock}, which is an array of connected modulo 12 counters. As the underlying group is commutative, it is easier to solve since the order of operations generating 
this subpuzzle does not matter in this lowest level. 

\appendix
\section{}
\begin{prop}
\label{prop:lagrangesize}
  $|G|= |\prod_{1\leq i <  n}G_i/G_{i+1}|$.
\end{prop}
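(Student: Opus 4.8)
The plan is to reduce the claim to an iterated application of Lagrange's theorem along the total chain, exploiting that the product in question is purely set-theoretic. First I would observe that, since $\prod_{1\leq i<n}G_i/G_{i+1}$ is a Cartesian product of finite sets, its cardinality factors as $\big|\prod_{1\leq i<n}G_i/G_{i+1}\big| = \prod_{1\leq i<n}|G_i/G_{i+1}|$. For each consecutive pair $G_i\geq G_{i+1}$ in the chain, the number of right cosets is the index, and classical Lagrange's theorem gives $|G_i/G_{i+1}| = [G_i:G_{i+1}] = |G_i|/|G_{i+1}|$; this is legitimate because the chain is finite, so all the orders involved are finite and the index behaves as a quotient of orders.

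The key step is then to notice that the resulting product telescopes. Writing
$$\prod_{1\leq i<n}|G_i/G_{i+1}| = \prod_{i=1}^{n-1}\frac{|G_i|}{|G_{i+1}|} = \frac{|G_1|}{|G_n|},$$
where successive numerators and denominators cancel, one is left with the ratio of the orders of the two endpoints of the chain. Because the chain is \emph{total}, $G_n=\{1\}$, so $|G_n|=1$, while $G_1=G$ by hypothesis; hence the ratio equals $|G|$, which is precisely the assertion.

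There is no genuine obstacle here: the only two hypotheses that must be used are finiteness, which underwrites the multiplicativity of indices, and totality, which is what makes the telescoping sum terminate at $|G|$ rather than at $|G|/|G_n|$. I would emphasize in the write-up that this elementary cardinality count is exactly what justifies the shorter, non-constructive argument sketched in Section \ref{sect:flatstates}: since the raising map $\ro_s$ of Theorem \ref{thm:lagrangecoords} sends $G$ injectively into $\prod_{1\leq i<n}G_i/G_{i+1}$, matching cardinalities forces it to be a bijection on states, independently of the explicit flattening computation.
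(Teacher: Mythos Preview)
Your argument is correct and rests on the same idea as the paper's: use Lagrange's theorem to write each index as $|G_i|/|G_{i+1}|$ and observe that the product telescopes to $|G_1|/|G_n|=|G|$. The only cosmetic difference is that the paper packages this as an induction on chain refinement (showing that inserting an intermediate subgroup $L$ between consecutive $H>K$ leaves the product unchanged), whereas you compute the telescoping product directly; the mathematical content is identical.
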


\begin{proof}
 The statement is true for the chain $G>\langle 1\rangle$ which yields the trivial decomposition, as  $G:\langle 1\rangle=|G|$. Now let $H$ and $K$ be consecutive members of a subgroup chain, $H>K$. They contribute in the product by  a factor $H:K=\frac{|H|}{|K|}$ (by Lagrange Theorem). Now we refine the chain by introducing $L$  in between: $H>L>K$. So the contribution is $(H:L)\cdot(L:K)$, which is $\frac{|H|}{|L|}\cdot \frac{|L|}{|K|}=\frac{|H|}{|K|}$.   
\end{proof}

\bibliographystyle{alpha}
\bibliography{sgc}

\end{document}